\newtheorem{theorem}{Theorem}[section]
\newtheorem{corollary}[theorem]{Corollary}
\newtheorem{proposition}[theorem]{Proposition}
\newtheorem{remark}[theorem]{Remark}
\theoremstyle{definition}
\newtheorem{definition}[theorem]{Definition}
\numberwithin{figure}{section}
\numberwithin{table}{section}
\newcommand{\simpd}{\mbox{SimpD}}
\newcommand{\sd}{\mbox{Sd}}
\newcommand{\SC}{\mbox{SC}}
\newcommand{\pr}{\mbox{pr}}
\newcommand{\tc}{\mbox{TC}}
\newcommand{\id}{\mbox{id}}
\newcommand{\D}{\mbox{D}}
\newcommand{\cat}{\mbox{cat}}
\author{Ayse Borat}
\date{\today}
\address{\textsc{Ayse Borat}
Bursa Technical University\\
Faculty of Engineering and Natural Sciences\\
Department of Mathematics\\
Bursa, Turkey}
\email{ayse.borat@btu.edu.tr}
\begin{document}

\title{Simplicial Distance}

\subjclass[2010]{}

\keywords{Topological complexity, simplicial complexity, simplicial complex, barycentric subdivision, contiguous maps}

\begin{abstract}
In this paper we will introduce and give topological properties of a new concept named simplicial distance which is the simplicial analog of the homotopic distance (in the sense of Marcias-Virgos and Mosquera-Lois in their paper \cite{VM}). According to our definition of simplicial distance, simplicial complexity is a particular case of this new concept. 
\end{abstract}

\maketitle

%%%%%%%%%%%%%%%%%%%%%%%%%%%%%%%%%%%%%%%%%%%%%%%%%%%%%
\section{Introduction}
%%%%%%%%%%%%%%%%%%%%%%%%%%%%%%%%%%%%%%%%%%%%%%%%%%%%%

Simplicial complexity as defined by Gonzalez in \cite{G} and its higher analogs as defined by Borat in \cite{B} have benefits such as the motion planners in this setting can be computed with a help of a computer. In this paper we will introduce a new concept named simplicial distance of which simplicial complexity is a particular case. So one open question in this paper is to carry the work to the digital world.

To understand the simplicial complexity more clearly we need the definition of the topological complexity. Therefore we will start this section by recalling some basic definitions such as $\tc$, $\cat$ and homotopic distance. 

\begin{definition}\cite{VM} Let $f,g: X\rightarrow Y$ be continuous maps. Homotopic distance $\D(f,g)$ between $f$ and $g$ is the least non-negative integer $k$ if there exist open subsets $U_0, U_1,\ldots, U_k$ covering $X$ such that $f|_{U_i}\simeq g|_{U_i}$ for each $i=0,1,\ldots,k$.
\end{definition}

\begin{definition}\cite{FMMV2} Lusternik Schnirelmann category $\cat(X)$ of a space $X$ is the least non-negative integer $k$ if there exist open subsets $U_0, U_1,\ldots, U_k$ covering $X$ such that each inclusion $\iota: U_i \hookrightarrow X$ is nullhomotopic. 
\end{definition}

\begin{definition}\cite{F} Let $\pi:PX\rightarrow X\times X$, $\pi(\gamma)=(\gamma(0),\gamma(1))$ be the path fibration where $PX$ stands for the path space of $X$. Topological complexity $\tc(X)$ is the least non-negative integer $k$ if there exist open subsets $U_0, U_1,\ldots, U_k$ covering $X\times X$ such that $\pi$ admits a continuous section on each $U_i$.
\end{definition}

\begin{definition}\cite{G} Let $K$ be an ordered complex. For $b, c \in \mathbb{Z}_{\geq 0}$, the $(b,c)$-simplicial distance $\SC^b_c(K)$ is the least non-negative integer $k$ if there are subcomplexes $J_0, J_1, \cdots, J_k$ covering $\operatorname{Sd}^b(K)$ such that

\[
\pi_1:J_i\hookrightarrow \operatorname{Sd}^b(K\times K) \xrightarrow{\iota} K\times K \xrightarrow{\pr_1} K
\]
\noindent and 
\[
\pi_2:J_i\hookrightarrow \operatorname{Sd}^b(K\times K) \xrightarrow{\iota} K\times K \xrightarrow{\pr_2} K 
\]

\noindent are $c$-contiguous for each $i\in \{0,1,\cdots, k\}$. \\

If there is no such covering, then we set $\SC^b_c(K)=\infty$.
\end{definition}

\begin{definition}\cite{G} There is a monotonic sequence 
\[
\SC^b_0(K)\geq \SC^b_1(K) \geq \SC^b_2(K) \geq \ldots
\]

\noindent $\SC^b(K)$ is defined to be the stabilized value of this sequence. Another monotonic sequence can be given by 
\[
\SC^0(K)\geq \SC^1(K) \geq \SC^2(K) \geq \ldots
\]

\noindent The \textit{simplicial complexity} $\SC(K)$ is defined to be the stabilized value of the latter sequence. 
\end{definition}

Notice that topological complexity in \cite{F} and Lusternik Schnirelmann category in \cite{FMMV2} are defined in "nonreduced" terms. Throughout this paper we will use the reduced version of homotopic distance, $\tc$, $\cat$ and their simplicial analogs.

If we replace $f=\pr_1:X\times X\rightarrow X$ and $g=\pr_2: X\times X\rightarrow X$ with the projection maps to the factors, then $\D(\pr_1,\pr_2)=\tc(X)$, \cite{VM}. Moreover, one can see homotopic distance as a generalization of Lusternik Schnirelmann category as well, as $\D(\id_X,c)=\cat(X)$ where $c$ is a constant map on $X$, \cite{VM}. One another characterization of LS category via homotopic distance can be given as follows.

For a base point $x_0\in X$, consider the inclusions $i_1,i_2: X\rightarrow X\times X$ given by $i_1(x)=(x,x_0)$ and by $i_2(x)=(x_0,x)$. Then $\D(i_1,i_2)=\cat(X)$, \cite{VM}.

One of the main results of this paper is that we can have a similar relation between $\SC$ and $\simpd$ as we have a relation between $\tc$ and $\D$ in the topological realm, see Proposition~\ref{c4}. We also have a result which gives the relation between $\simpd$ and the $\tc$ of the geometric realization of some particular complex, see Proposition~\ref{c4}. One another main result is that if $\alpha: L \rightarrow M$ is a simplicial map with a left strong homotopy type inverse, then  $\simpd(\alpha\circ \varphi, \alpha \circ \psi)=\simpd(\varphi,\psi)$, see Corollary~\ref{c5}. We have more results in the last section such as how being in the same contiguity class effects the simplicial distance. 

We would like to mention that for more work on homotopic distance one can see \cite{BV} where homotopic distance is introduced in the higher dimensional case and \cite{MVML2} in which homotopic distance between functors is carried out.

%%%%%%%%%%%%%%%%%%%%%%%%%%%%%%%%%%%%%%%%%%%%%%%%%%%%%
\section{Simplicial Distance}
%%%%%%%%%%%%%%%%%%%%%%%%%%%%%%%%%%%%%%%%%%%%%%%%%%%%%

\begin{definition}
Let $\varphi, \psi \colon K \to L$ be two simplicial maps between simplicial complexes. $\varphi$ and $\psi$ are said to be contiguous, if $\sigma=\{v_0,\dotsc,v_n\}$ is a simplex in $K$, then $\varphi(\sigma) \cup \psi(\sigma)=\{ \varphi(v_0), \dotsc, \varphi(v_n), \psi(v_0), \dotsc, \psi(v_n) \}$ constitute a simplex in $L$ .
\end{definition} 

\begin{definition} \cite{G} Let $K, L$ be complexes. $\varphi, \psi: K\rightarrow L$ are called $c$-contiguous, denoted by $\varphi \sim_c \psi$, if there exists a sequence of maps $H_i:K\rightarrow L$, $i=0,1,\ldots, c$ satisfying $H_0=f$, $H_c=g$ and that for each $i=1,2,\cdots, c$, the pair $(H_{i-1},H_i)$ is contiguous. 
\end{definition}

Topological realization of the product of abstract complexes is homeomorphic to the product of the topological realizations of these complexes. So throughout this paper, all complexes and their products will be taken in the category of ordered complexes unless stated otherwise.

For $b\in \mathbb{Z}_{\geq 0}$, we will fix an approximation $\iota: \sd^{b+1}(K)\rightarrow \sd^b(K)$ of the identity map $\id_{||K||}$. Since approximations behave well with respect to compositions (see, for example, Remark 2.4 in \cite{G}), by abuse of notation, we will denote the iterated compositions of the maps between barycentric subdivisions by $\iota: \sd^{b}(K)\rightarrow \sd^{b'}(K)$. 

\begin{definition} Let $K, L$ be an ordered complex and $\varphi, \psi: K\rightarrow L$ be simplicial maps. For $b, c \in \mathbb{Z}_{\geq 0}$, the $(b,c)$-simplicial distance $\simpd^b_c(\varphi,\psi)$ is the least non-negative integer $k$ if there are $k+1$ subcomplexes $J_0, J_1, \cdots, J_k$ covering $\operatorname{Sd}^b(K)$ such that

\[
\gamma_\varphi:J_i\hookrightarrow \operatorname{Sd}^b(K) \xrightarrow{\pi_\varphi} L 
\]
\noindent and 
\[
\gamma_\psi:J_i\hookrightarrow \operatorname{Sd}^b(K) \xrightarrow{\pi_\psi} L 
\]

\noindent are $c$-contiguous for each $i\in \{0,1,\cdots, k\}$ where $\pi_\varphi:\operatorname{Sd}^b(K)\xhookrightarrow{\iota} K\xrightarrow{\varphi} L $ and $\pi_\psi:\operatorname{Sd}^b(K)\xhookrightarrow{\iota} K\xrightarrow{\psi} L $. \\

If there is no such covering, then we set $\simpd^b_c(\varphi,\psi)=\infty$.

\end{definition}

\begin{proposition}\label{P} If $\pr_i:K\times K \rightarrow K$ is the projection to the $i$-th factor, then $\simpd^b_c(\pr_1, \pr_2)= \SC^b_c(K)$.
\end{proposition}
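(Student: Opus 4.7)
The plan is to unwind both definitions and observe that they coincide term-by-term, so that the proof is essentially tautological. First I would apply the definition of $\simpd^b_c(\varphi,\psi)$ with domain $K\times K$, codomain $K$, and $(\varphi,\psi)=(\pr_1,\pr_2)$. A witnessing family then consists of subcomplexes $J_0,\ldots,J_k$ covering $\sd^b(K\times K)$ such that the two restrictions
\[
J_i\hookrightarrow \sd^b(K\times K)\xhookrightarrow{\iota}K\times K\xrightarrow{\pr_j}K,\quad j=1,2,
\]
are $c$-contiguous on $J_i$. This is precisely the data required to witness $\SC^b_c(K)$, where the same compositions appear as $\pi_1$ and $\pi_2$.

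Next I would argue that the two sets of admissible coverings coincide, so that the minimum $k$ over them is the same, giving $\simpd^b_c(\pr_1,\pr_2)=\SC^b_c(K)$. The only subtlety is a minor notational ambiguity in the stated definition of $\SC^b_c(K)$, where the covering is nominally written over $\sd^b(K)$ even though the subcomplexes $J_i$ lie in $\sd^b(K\times K)$; I would read this as a typo for $\sd^b(K\times K)$, which is the only interpretation consistent with the appearance of $\pi_1,\pi_2$ in the same display. With that identification there is no real obstacle: the result follows by matching the definitions, and indeed this proposition is precisely the simplicial analog of the equality $\D(\pr_1,\pr_2)=\tc(X)$ recalled earlier in the introduction.
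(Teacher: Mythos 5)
Your proposal is correct and matches the paper's treatment: the paper offers no written proof for this proposition (it is stated with an immediate \qed), precisely because, as you observe, specializing the definition of $\simpd^b_c$ to $\varphi=\pr_1$, $\psi=\pr_2$ on $K\times K$ reproduces verbatim the data defining $\SC^b_c(K)$. Your reading of the covering in the definition of $\SC^b_c(K)$ as being of $\sd^b(K\times K)$ is also the intended one.
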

\qed

The following proposition follows by Proposition~\ref{P} and (3.4) in \cite{G}.

\begin{proposition} If $\pr_i:K\times K \rightarrow K$ is the projection to the $i$-th factor, then $\tc(||K||)\leq \simpd^b_c(\pr_1, \pr_2)$.
\end{proposition}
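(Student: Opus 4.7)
The plan is to combine Proposition~\ref{P} with the standard dictionary between $c$-contiguity and homotopy. By Proposition~\ref{P}, $\simpd^b_c(\pr_1,\pr_2)=\SC^b_c(K)$, so it suffices to prove $\tc(||K||)\leq \SC^b_c(K)$, which is essentially (3.4) of \cite{G}.

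To do so, let $k=\SC^b_c(K)$ and fix a cover $J_0,\ldots,J_k$ of $\sd^b(K\times K)$ by subcomplexes on which the two projections $\pi_1,\pi_2\colon J_i\hookrightarrow \sd^b(K\times K)\xrightarrow{\iota} K\times K\xrightarrow{\pr_j} K$ are $c$-contiguous. I would then pass to geometric realizations via $||\sd^b(K\times K)||\cong ||K||\times ||K||$. Each contiguity step between two simplicial maps induces an affine homotopy on realizations, and concatenating the $c$ steps yields a homotopy $H_i\colon ||J_i||\times I\to ||K||$ from $||\pi_1|_{J_i}||$ to $||\pi_2|_{J_i}||$. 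Sending $x\in ||J_i||$ to the path $t\mapsto H_i(x,t)$ then defines a continuous section of the path fibration $P||K||\to ||K||\times||K||$ over $||J_i||$.

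To upgrade the closed sets $||J_i||$ to the open cover required by the definition of $\tc$, I would replace each $||J_i||$ by an open neighborhood $U_i\subset ||K||\times||K||$ that deformation retracts onto it. Such a $U_i$ can be produced as the open star of $||J_i||$ in a sufficiently fine barycentric subdivision, which is the standard device underlying (3.4) of \cite{G}. Precomposing the section above with the retraction $U_i\to ||J_i||$ extends it to $U_i$, producing $k+1$ open sets $U_0,\ldots,U_k$ that cover $||K||\times||K||$ and admit continuous sections of the path fibration, hence $\tc(||K||)\leq k$.

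The main obstacle is exactly this last thickening step: converting a subcomplex cover of $||K||\times ||K||$ into an open cover of the same cardinality on which the sections remain continuous. Once one accepts that subcomplex covers can be thickened to open covers of equal cardinality equipped with deformation retracts onto the original subcomplexes, everything else is a routine translation between the combinatorial and topological settings, and one may in fact simply quote (3.4) of \cite{G} after invoking Proposition~\ref{P}.
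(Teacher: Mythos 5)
Your proposal is correct and follows exactly the paper's route: the paper proves this proposition by combining Proposition~\ref{P} with (3.4) of \cite{G}, which is precisely your opening reduction. The additional sketch you give of why $\tc(||K||)\leq \SC^b_c(K)$ (realizing contiguity chains as homotopies and thickening subcomplexes to open stars) is a reasonable account of the cited result, but the paper itself simply quotes it.
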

\qed

\begin{remark} For simplicial maps $\varphi, \psi \colon K \to L$, we have the sequence

\[
\simpd^{b}_0(\varphi,\psi)\geq \simpd^{b}_1(\varphi,\psi)\geq \simpd^{b}_2(\varphi,\psi) \geq \ldots 
\]

So we can define 

\[
\simpd^{b}(\varphi,\psi):= \underset{c\to \infty}{\lim} \simpd^b_c(\varphi,\psi).
\]
\end{remark}

\begin{remark}\label{r1}\cite{S} Composites of 1-contiguous maps are 1-contiguous, that is, if  $\varphi,\widetilde{\varphi}: K\rightarrow L$ and $\psi, \widetilde{\psi}:L\rightarrow M$ are 1-contiguous, so is $\psi\circ\varphi\sim_1  \widetilde{\psi}\circ \widetilde{\varphi}$. It follows that we have a well-defined composition of contiguity classes $[\psi]\circ [\varphi]=[\psi\circ\varphi]$.
\end{remark}

\begin{remark}\label{r2}\cite{S} Two approximations to the same continuous maps are 1-contiguous. 
\end{remark}

\begin{remark}\label{r3}\cite[Remark 2.6]{G}\label{r3} If $\varphi$ and $\psi$ are $c$-contiguous, then the compositions

\[
\beta\varphi\alpha, \beta\psi\alpha: J\xrightarrow{\alpha} K\xrightarrow{\varphi,\psi} L\xrightarrow{\beta} M
\]

are $c$-contiguous.
\end{remark}

\begin{proposition} Let $\varphi, \psi: K\rightarrow L$ be simplicial maps. $\simpd^b(\varphi,\psi)$ is independent of the chosen approximations $\iota: \sd^{b}(K)\rightarrow \sd^{b-1}(K)$ of the identity.
\end{proposition}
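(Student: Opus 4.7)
The plan is to use Remarks~\ref{r1}--\ref{r3} to show that swapping one system of approximations for another changes each $\gamma_\varphi\!\restriction_{J_i}$ and $\gamma_\psi\!\restriction_{J_i}$ by at most a $1$-contiguity, which is absorbed in the stabilized limit over $c$.

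First I would unpack what the approximation $\iota\colon \sd^b(K)\to K$ actually denotes: by convention it is the iterated composition of chosen approximations $\iota_j\colon \sd^j(K)\to \sd^{j-1}(K)$ of $\id_{\|K\|}$, $j=1,\ldots,b$. Let $\{\iota_j\}$ and $\{\iota'_j\}$ be two such systems, producing iterated compositions $\iota,\iota'\colon \sd^b(K)\to K$. By Remark~\ref{r2} each pair $\iota_j,\iota'_j$ is $1$-contiguous, and Remark~\ref{r1} shows that $1$-contiguity is preserved under composition, so $\iota\sim_1 \iota'$.

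Next, fix $c\in\mathbb{Z}_{\geq 0}$ and let $k=\simpd^b_c(\varphi,\psi)$ computed with respect to $\iota$, witnessed by subcomplexes $J_0,\ldots,J_k$ covering $\sd^b(K)$ with $\varphi\circ\iota\!\restriction_{J_i}\sim_c \psi\circ\iota\!\restriction_{J_i}$. Apply Remark~\ref{r3} with the inclusion $J_i\hookrightarrow \sd^b(K)$ on the right and $\varphi$ on the left to obtain $\varphi\circ\iota\!\restriction_{J_i}\sim_1 \varphi\circ\iota'\!\restriction_{J_i}$, and similarly for $\psi$. Concatenating the three contiguity sequences gives
\[
\varphi\circ\iota'\!\restriction_{J_i}\;\sim_1\;\varphi\circ\iota\!\restriction_{J_i}\;\sim_c\;\psi\circ\iota\!\restriction_{J_i}\;\sim_1\;\psi\circ\iota'\!\restriction_{J_i},
\]
so $\varphi\circ\iota'\!\restriction_{J_i}\sim_{c+2}\psi\circ\iota'\!\restriction_{J_i}$ for each $i$. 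The very same covering therefore witnesses $\simpd^b_{c+2}(\varphi,\psi)\leq k$ computed with respect to $\iota'$.

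Finally, swapping the roles of $\iota$ and $\iota'$ yields the reverse inequality, so for every $c$ the two versions of $\simpd^b_c(\varphi,\psi)$ differ only by a shift of at most $2$ in the subscript. Passing to the limit $c\to\infty$ in the definition of $\simpd^b(\varphi,\psi)$ makes the two stabilized values coincide. The main obstacle is mostly bookkeeping: one must be careful to verify that iterating approximations preserves $1$-contiguity, and recognize that prepending and appending a $1$-contiguity to a $c$-contiguity sequence only gives $(c+2)$-contiguity, so invariance is visible only after stabilization in $c$ rather than at each fixed level.
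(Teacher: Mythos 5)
Your proof is correct and takes essentially the same route as the paper's: both use Remark~\ref{r2} and Remark~\ref{r3} to see that the composites built from the two approximations differ by a $1$-contiguity on each $J_i$, concatenate to obtain a $(c+2)$-chain, and conclude equality of the two invariants after stabilizing in $c$. Your extra step of unpacking $\iota$ as an iterated composition and invoking Remark~\ref{r1} is a minor (and welcome) refinement of the same argument.
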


\begin{proof} Suppose there are two approximations of the identity, namely, $\iota, \tilde{\iota}: \sd^{b}(K)\rightarrow \sd^{b-1}(K)$. Suppose also $\simpd^b(\varphi,\psi)$ and $\widetilde{\simpd}^b(\varphi,\psi)$ denote the invariants obtained from $\iota$ and $\tilde{\iota}$, respectively. 

Consider an inclusion $j:J\rightarrow \sd^{b}(K)$ of some subcomplex $J\subseteq K$. Since $\gamma_\varphi=\pi_\varphi \circ j$ and $\gamma_\psi=\pi_\psi \circ j$ are $c$-contiguous, there are $h_0, h_1, \ldots, h_c: J\rightarrow L$ such that $\pi_\varphi \circ j = h_0$ and $\pi_\psi \circ j =h_c$ and $(h_{i-1}, h_i)$ is 1-contiguous. On the other hand, by Remark~\ref{r2} and Remark~\ref{r3}, $(\pi_{\varphi}\circ j, \widetilde\pi_{\varphi}\circ j)$ and  $(\pi_{\psi}\circ j, \widetilde\pi_{\psi}\circ j)$ are $c$-contiguous pairs. So we may obtain a contiguity chain $\widetilde\pi_{\varphi}\circ j, h_0, h_1, \ldots, h_c, \widetilde\pi_{\psi}\circ j: J\rightarrow L$ of length $c+2$. So $\widetilde{\simpd}_{c+2}^b(\varphi,\psi) \leq \simpd_c^b(\varphi,\psi)$. Proceeding similarly, one can show that $\simpd_{c+2}^b(\varphi,\psi) \leq \widetilde{\simpd}_c^b(\varphi,\psi)$. Hence we have $\simpd^b(\varphi,\psi)=\widetilde{\simpd}^b(\varphi,\psi)$.
\end{proof}

\begin{proposition}\label{p3} There is a sequence 
\[
\simpd^0(\varphi,\psi)\geq \simpd^1(\varphi,\psi)\geq \simpd^2(\varphi,\psi)\geq \ldots
\]
\end{proposition}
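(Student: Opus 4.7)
The plan is to fix $b \geq 0$ and show that for every $c$ there exists $c'$ (depending on $c$) with $\simpd^{b+1}_{c'}(\varphi,\psi) \leq \simpd^b_c(\varphi,\psi)$; the claimed inequality $\simpd^{b+1}(\varphi,\psi) \leq \simpd^b(\varphi,\psi)$ will then follow by letting $c \to \infty$.

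Starting from a covering $J_0, J_1, \ldots, J_k$ of $\sd^b(K)$ witnessing $\simpd^b_c(\varphi,\psi) \leq k$, I would take as the candidate covering of $\sd^{b+1}(K)$ the subcomplexes $\sd(J_0), \ldots, \sd(J_k)$, viewed inside $\sd^{b+1}(K) = \sd(\sd^b(K))$. A brief combinatorial check shows these do cover $\sd^{b+1}(K)$: every simplex of $\sd^{b+1}(K)$ is a chain $\sigma_0 < \cdots < \sigma_n$ of simplices of $\sd^b(K)$, and since the top element $\sigma_n$ lies in some $J_i$ and $J_i$ is closed under faces, the whole chain belongs to $\sd(J_i)$.

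The substantive step will be to bound the contiguity distance of the level-$(b+1)$ compositions $\pi_\varphi$ and $\pi_\psi$ when restricted to each $\sd(J_i)$. I would pick an auxiliary simplicial approximation $\iota' \colon \sd(J_i) \to J_i$ of the identity and compare two simplicial maps $\sd(J_i) \to L$: the direct restriction $\sd(J_i) \hookrightarrow \sd^{b+1}(K) \xrightarrow{\iota} K \xrightarrow{\varphi} L$, and the rerouted composition $\sd(J_i) \xrightarrow{\iota'} J_i \hookrightarrow \sd^b(K) \xrightarrow{\iota} K \xrightarrow{\varphi} L$. Both are simplicial approximations of the same underlying continuous map, so by Remarks~\ref{r2} and~\ref{r3} they are $1$-contiguous; the analogous statement holds for $\psi$. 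Combined with the $c$-contiguity of the two rerouted compositions (which follows from the hypothesis on $J_i$ after precomposing with $\iota'$, via Remark~\ref{r3}), chaining yields $(c+2)$-contiguity on $\sd(J_i)$, and hence $\simpd^{b+1}_{c+2}(\varphi,\psi) \leq k$.

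The main obstacle I anticipate is that the fixed approximation $\iota \colon \sd^{b+1}(K) \to \sd^b(K)$ is not guaranteed to send $\sd(J_i)$ into $J_i$, so the hypothesis on $J_i$ cannot be invoked directly. Introducing the auxiliary approximation $\iota'$ and working modulo $1$-contiguity via Remarks~\ref{r2} and~\ref{r3} circumvents this; the resulting constant shift by $2$ in the contiguity length disappears in the limit $c \to \infty$.
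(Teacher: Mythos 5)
Your proposal is correct and follows essentially the same route as the paper: the paper also passes from a subcomplex $J$ of $\sd^b(K)$ to $\sd^1(J)$ inside $\sd^{b+1}(K)$, reroutes through an approximation $\lambda\colon \sd^1(J)\to J$ of the identity, uses that the two resulting compositions into $\sd^b(K)$ are $1$-contiguous (Remarks~\ref{r2} and~\ref{r3}), and concludes $\simpd^{b+1}_{c+2}(\varphi,\psi)\leq \simpd^b_c(\varphi,\psi)$, with the shift by $2$ absorbed in the limit. Your explicit check that the $\sd(J_i)$ cover $\sd^{b+1}(K)$ is a detail the paper leaves implicit, but the argument is the same.
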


\begin{proof} Consider a subcomplex $J\subseteq K$. Let $\lambda: \sd^1(J)\rightarrow J$ be an approximation of the identity on $||J||$. Then we have the following diagram in which the compositions are 1-contiguous since $\alpha$ and $\beta$ are approximations of the inclusions $||J||\rightarrow ||K||$. 

\begin{center}
\begin{tikzcd} 
\operatorname{Sd}^1(J) \arrow[r, "\beta"] \arrow[d, "\lambda"]
& \operatorname{Sd}^{b+1}(K) \arrow[d, "\iota"] \\
J \arrow[r, "\alpha"]
& \operatorname{Sd}^{b}(K)
\end{tikzcd}
\end{center}

Let us consider $\simpd^b_c(\varphi,\psi)$ and suppose that $\gamma_\varphi$ and $\gamma_\psi$ are $c$-contiguous. From the above diagram, we can construct $c+2$-contiguous simplicial maps $\bar{\gamma}_\varphi$ and $\bar{\gamma}_\psi$ such that they are 1-contiguous to $\gamma_\varphi$ and $\gamma_\psi$, respectively. Hence we have $\simpd^b_c(\varphi,\psi)\geq \simpd^{b+1}_{c+2}(\varphi,\psi)$. Therefore

\[
\simpd^b(\varphi,\psi)= \simpd^b_c(\varphi,\psi) \hspace{0.03in}, \hspace{0.3in} \text{for large } c
\]
\[
\geq \simpd^{b+1}_{c+2}(\varphi,\psi)
\]
\[
\hspace{0.04in} \geq \simpd^{b+1}(\varphi,\psi).
\]
\end{proof}

\begin{definition} The stabilized value of the sequence in Proposition~\ref{p3} will be called simplicial distance between $\varphi$ and $\psi$, and be denoted by $\simpd(\varphi,\psi)$. 

\end{definition}

%%%%%%%%%%%%%%%%%%%%%%%%%%%%%%%%%%%%%%%%%%%%%%%%%%%%%
\section{Properties of Simplicial Distance}
%%%%%%%%%%%%%%%%%%%%%%%%%%%%%%%%%%%%%%%%%%%%%%%%%%%%%

The first proposition shows the relation between the simplicial distance and the topological complexity and the relation between the simplicial distance and the simplicial complexity.   

\begin{proposition}\label{c4} For a finite complex $K$, if $\pr_i:K\times K \rightarrow K$ is the projection to the $i$-th factor for $i=1,2$, then we have the following

\begin{itemize}
\item[(a)] $\tc(||K||)= \simpd(\pr_1, \pr_2)$. 
\item[(b)] $\simpd(\pr_1, \pr_2)=\SC(K)$.
\end{itemize}
\end{proposition}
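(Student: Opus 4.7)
The plan is to dispatch part (b) first as an essentially formal consequence of Proposition~\ref{P} by taking the two nested stabilizing limits, and then to deduce part (a) by chaining (b) with Gonzalez's theorem that $\SC(K)=\tc(\|K\|)$ for a finite ordered complex $K$ (the main comparison result of \cite{G}).

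For (b), I would start from the pointwise identity $\simpd^b_c(\pr_1,\pr_2)=\SC^b_c(K)$ supplied by Proposition~\ref{P}, valid at every pair $(b,c)$. Both $\SC(K)$ and $\simpd(\pr_1,\pr_2)$ are defined by the same iterated stabilization: first fix $b$ and let $c\to\infty$, obtaining $\simpd^b(\pr_1,\pr_2)=\SC^b(K)$; then let $b\to\infty$. Since the two monotone sequences coincide term by term at every stage, their stabilized values must agree, giving $\simpd(\pr_1,\pr_2)=\SC(K)$.

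For (a), one direction is already in hand: the proposition immediately following Proposition~\ref{P} provides $\tc(\|K\|)\leq \simpd^b_c(\pr_1,\pr_2)$ for every $(b,c)$, and this bound persists under stabilization to yield $\tc(\|K\|)\leq\simpd(\pr_1,\pr_2)$. For the reverse inequality I would concatenate part (b) with Gonzalez's equality $\SC(K)=\tc(\|K\|)$ for finite $K$, obtaining $\simpd(\pr_1,\pr_2)=\SC(K)=\tc(\|K\|)$.

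The argument is essentially bookkeeping, so I do not anticipate a real obstacle. The only place demanding a little care is the role of the finiteness hypothesis: it is used exclusively to invoke Gonzalez's theorem $\SC(K)=\tc(\|K\|)$, and without it part (a) would degrade to the one-sided bound $\tc(\|K\|)\leq\simpd(\pr_1,\pr_2)$ already supplied by the corollary of Proposition~\ref{P}.
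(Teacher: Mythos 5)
Your proposal is correct and uses the same two ingredients as the paper's one-line proof --- Proposition~\ref{P} and Theorem~3.5 of \cite{G} --- merely deriving (b) before (a) rather than the reverse. Your observation that (b) follows from Proposition~\ref{P} alone by term-by-term stabilization of the two monotone sequences (so that finiteness of $K$ is needed only for (a), via $\SC(K)=\tc(\|K\|)$) is a slight sharpening of the paper's argument, which instead obtains (b) from Theorem~3.5 together with (a).
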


\begin{proof}
(a) follows from Theorem 3.5 in \cite{G} and Proposition~\ref{P} whereas (b) follows from Theorem 3.5 in \cite{G} and (a).
\end{proof}

\begin{corollary} If $K$ and $L$ are two finite complexes whose geometric realizations are homotopy equivalent. Let $\pr_i:K\times K \rightarrow K$ and $\widetilde{pr_i}:L\times L \rightarrow L$ be the projections to the $i$-th factors for $i=1,2$, then $\simpd(\pr_1, \pr_2)=\simpd(\widetilde{pr_1}, \widetilde{pr_2})$.
\end{corollary}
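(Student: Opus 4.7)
The plan is to reduce the statement immediately to the homotopy invariance of topological complexity, using Proposition~\ref{c4}(a) as a bridge between the simplicial distance of the projection pair and the topological complexity of the geometric realization.

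First I would invoke Proposition~\ref{c4}(a) applied to $K$, which gives $\simpd(\pr_1,\pr_2)=\tc(\|K\|)$, and then apply it again to $L$, which gives $\simpd(\widetilde{\pr_1},\widetilde{\pr_2})=\tc(\|L\|)$. Note that both $K$ and $L$ are finite by hypothesis, so the applicability of Proposition~\ref{c4}(a) is not an issue.

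Next I would appeal to the fact that topological complexity is a homotopy invariant of the underlying space (this is standard, due to Farber \cite{F}). Since $\|K\|$ and $\|L\|$ are assumed homotopy equivalent, we obtain $\tc(\|K\|)=\tc(\|L\|)$. Chaining the three equalities yields the desired identity $\simpd(\pr_1,\pr_2)=\simpd(\widetilde{\pr_1},\widetilde{\pr_2})$.

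There is essentially no obstacle here beyond making sure that the two instances of Proposition~\ref{c4}(a) are being applied in the same sense (same stabilization conventions, same notion of approximation), which the previous section has already taken care of. The corollary is a formal consequence of (a) and the homotopy invariance of $\tc$, so the proof will be only a couple of lines.
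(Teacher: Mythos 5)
Your argument is correct and matches the route the paper intends: the corollary is stated without proof precisely because it follows immediately from Proposition~\ref{c4}(a) applied to both $K$ and $L$ together with the homotopy invariance of $\tc$ from \cite{F}. Nothing is missing.
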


\begin{corollary} Let $\pr_i:K\times K \rightarrow K$ be projections as in Proposition~\ref{c4}, then $||K||$ is contractible iff $\simpd(\pr_1, \pr_2)=0$.
\end{corollary}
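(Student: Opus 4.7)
The plan is to reduce the statement to the classical fact that, in the reduced convention, $\tc(Y)=0$ if and only if $Y$ is contractible, and then invoke Proposition~\ref{c4}(a) to transfer this characterization to $\simpd(\pr_1,\pr_2)$. No new simplicial argument should be needed; the whole corollary is a bookkeeping consequence of the identification $\simpd(\pr_1,\pr_2)=\tc(||K||)$ already recorded in Proposition~\ref{c4}.

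For the forward implication I would assume $||K||$ is contractible and fix a contraction $H\colon ||K||\times I\to ||K||$ with $H_0=\id_{||K||}$ and $H_1\equiv x_0$. A continuous section of the path fibration $P||K||\to ||K||\times||K||$ defined on the entire product is then assembled by sending $(x,y)$ to the concatenation of the path $t\mapsto H(x,t)$ with the reverse of $t\mapsto H(y,t)$. Since one open set (the whole of $||K||\times||K||$) suffices, $\tc(||K||)=0$ in the reduced convention, and hence $\simpd(\pr_1,\pr_2)=0$ by Proposition~\ref{c4}(a).

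For the converse I would assume $\simpd(\pr_1,\pr_2)=0$, which by Proposition~\ref{c4}(a) yields $\tc(||K||)=0$. In the reduced version this is equivalent to a global homotopy $\pr_1\simeq \pr_2\colon ||K||\times ||K||\to ||K||$. Restricting such a homotopy to $||K||\times\{x_0\}$ for a chosen basepoint $x_0$ gives a homotopy $\id_{||K||}\simeq c_{x_0}$, so $||K||$ is contractible.

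There is no substantial obstacle in this argument. The only subtlety is to keep track of the reduced-versus-nonreduced convention: because the paper has fixed reduced definitions throughout, $\tc=0$ is witnessed by a single-set cover, which is exactly what makes the equivalence with contractibility hold in the stated form. Under the nonreduced convention the analogous statement would read ``$||K||$ contractible iff $\simpd(\pr_1,\pr_2)=1$'', so the careful bookkeeping of conventions is the only thing that needs attention.
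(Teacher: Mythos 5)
Your proposal is correct and matches the paper's (implicit) argument: the corollary is stated without proof precisely because it is the immediate combination of Proposition~\ref{c4}(a) with the classical fact that, in the reduced convention, $\tc(Y)=0$ if and only if $Y$ is contractible. Your explicit verification of both directions of that classical fact, and your remark about the reduced-versus-nonreduced bookkeeping, are accurate and fill in exactly the details the paper leaves to the reader.
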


\begin{proposition} $\simpd(\varphi,\psi)=\simpd(\psi,\varphi)$.
\end{proposition}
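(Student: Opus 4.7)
The plan is to reduce the claim to the symmetry of the $c$-contiguity relation. First I would observe that the relation $\sim_c$ on simplicial maps is symmetric: if $H_0,H_1,\dots,H_c$ is a contiguity chain with $H_0=f$ and $H_c=g$, then the reversed chain $H_c,H_{c-1},\dots,H_0$ is again a contiguity chain (the contiguity relation on a single pair is manifestly symmetric by the definition given, since the simplex $\varphi(\sigma)\cup\psi(\sigma)$ does not depend on the order of $\varphi,\psi$), and it witnesses $g\sim_c f$.

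Next I would apply this observation pointwise to every piece of the cover. Suppose $k=\simpd^b_c(\varphi,\psi)$ and pick subcomplexes $J_0,\dots,J_k$ covering $\sd^b(K)$ such that $\gamma_\varphi|_{J_i}\sim_c \gamma_\psi|_{J_i}$ for each $i$. By the symmetry of $\sim_c$, the same cover shows $\gamma_\psi|_{J_i}\sim_c \gamma_\varphi|_{J_i}$, hence $\simpd^b_c(\psi,\varphi)\leq k$. Swapping the roles of $\varphi$ and $\psi$ yields the reverse inequality, so $\simpd^b_c(\varphi,\psi)=\simpd^b_c(\psi,\varphi)$ for all $b,c$.

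Finally, I would pass to the stabilized value. Since equality holds at every $(b,c)$, taking the limit as $c\to\infty$ gives $\simpd^b(\varphi,\psi)=\simpd^b(\psi,\varphi)$, and then taking the limit as $b\to\infty$ (along the decreasing sequence of Proposition~\ref{p3}) yields $\simpd(\varphi,\psi)=\simpd(\psi,\varphi)$. There is no real obstacle here; the only thing to be careful about is pedantically noting that $\pi_\varphi$ and $\pi_\psi$ use the same chosen approximation $\iota$, so swapping $\varphi$ and $\psi$ literally swaps the two composites without altering the covering or the intermediate maps.
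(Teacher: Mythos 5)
Your argument is correct and is exactly the reasoning the paper leaves implicit: the proposition is stated with no written proof (just a \qed), since symmetry of contiguity, and hence of $c$-contiguity and of the defining condition on each $J_i$, is immediate. Your careful note that $\pi_\varphi$ and $\pi_\psi$ share the same approximation $\iota$ is a reasonable extra precaution but changes nothing essential.
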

\qed

\begin{theorem}\label{p3} If $\varphi\sim \psi$, then $\simpd(\varphi,\psi)=0$.
\end{theorem}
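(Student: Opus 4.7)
The plan is to show that the entire complex $\operatorname{Sd}^b(K)$ itself can serve as a single covering subcomplex $J_0$ on which $\gamma_\varphi$ and $\gamma_\psi$ are contiguous in some finite number of steps. Once this is established, $\simpd^b_c(\varphi,\psi)=0$ for all sufficiently large $c$ and all $b$, which gives the conclusion upon taking limits.

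First I would interpret $\varphi\sim\psi$ as meaning that $\varphi$ and $\psi$ lie in the same contiguity class (as used in Remark~\ref{r1}), so there exists a non-negative integer $c_0$ with $\varphi\sim_{c_0}\psi$. The key observation is then to apply Remark~\ref{r3} with $\alpha=\iota\colon \operatorname{Sd}^b(K)\rightarrow K$ and $\beta=\id_L$: this gives that the compositions
\[
\pi_\varphi=\varphi\circ\iota,\qquad \pi_\psi=\psi\circ\iota\colon \operatorname{Sd}^b(K)\rightarrow L
\]
are also $c_0$-contiguous.

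Next I would take the trivial cover $J_0:=\operatorname{Sd}^b(K)$ (a single subcomplex covering itself) in the definition of $\simpd^b_c(\varphi,\psi)$. Under this choice, $\gamma_\varphi$ and $\gamma_\psi$ reduce to $\pi_\varphi$ and $\pi_\psi$, which by the previous step are $c_0$-contiguous. Hence $\simpd^b_{c_0}(\varphi,\psi)\leq 0$, and therefore $\simpd^b_{c_0}(\varphi,\psi)=0$. By the monotonicity of the sequence $\simpd^b_c(\varphi,\psi)$ in $c$, we obtain $\simpd^b(\varphi,\psi)=\lim_{c\to\infty}\simpd^b_c(\varphi,\psi)=0$ for every $b\geq 0$. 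Taking the limit over $b$ (Proposition~\ref{p3}) yields $\simpd(\varphi,\psi)=0$.

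There is no serious obstacle here; the only thing worth being careful about is that $\varphi\sim\psi$ must be read as the contiguity-class relation (so that $c_0$-contiguity for some $c_0$ is available), and that Remark~\ref{r3} is what legitimately lets us compose with the fixed approximation $\iota$ without losing the contiguity bound. The argument is essentially that a homotopy between $\varphi$ and $\psi$ in the contiguity sense upgrades tautologically to a valid single-piece cover in the definition of $\simpd$.
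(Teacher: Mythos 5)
Your proposal is correct and follows essentially the same route as the paper: take the single-piece cover $J_0=\operatorname{Sd}^b(K)$ and transport the contiguity chain from $\varphi\sim_{c_0}\psi$ through the fixed approximation $\iota$ to conclude $\gamma_\varphi\sim_{c_0}\gamma_\psi$, hence $\simpd^b_{c_0}(\varphi,\psi)=0$. The only cosmetic difference is that you invoke Remark~\ref{r3} to compose the whole chain at once, whereas the paper builds the intermediate maps $G_i=H_i\circ\iota\circ j$ explicitly and cites Remark~\ref{r1}; these are the same argument.
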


\begin{proof} If $\varphi\sim \psi$, then there is some non-negative integer $c$ such that $\varphi\sim_c \psi$, that is, there exists $H:K\rightarrow L$ such that $H_0=\varphi$, $H_c=\psi$ and $(H_{i-1},H_i)$ is 1-contiguous for each $i$. If we could show that $\simpd^b_c(\varphi,\psi)=0$, then it immediately follows that $\simpd(\varphi,\psi)=0$. 

Consider the following simplicial maps 

\[
\gamma_\varphi:K\xrightarrow{j} \operatorname{Sd}^b(K)\xhookrightarrow{\iota} K\xrightarrow{\varphi} L 
\]
\noindent and 
\[
\gamma_\psi:K\xrightarrow{j} \operatorname{Sd}^b(K)\xhookrightarrow{\iota} K\xrightarrow{\psi} L 
\]

where $\varphi$ and $\psi$ are $c$-contiguous. Define 

\[
G: K \xrightarrow{j} \operatorname{Sd}^b(K)\xhookrightarrow{\iota} K\xrightarrow{H} L 
\]

We have $G_0=H_0\circ \iota \circ j=\varphi\circ \iota \circ j$ and $G_c=H_c\circ \iota \circ j=\psi\circ \iota \circ j$, and $(G_{i-1}, G_i)$ is 1-contiguous for each $i$ by Remark~\ref{r1}. So $\gamma_\varphi \sim_c \gamma_\psi$. Hence $\simpd(\varphi,\psi)=0$. 

\end{proof}

It is known that if $f,g:||K||\rightarrow ||L||$ are homotopic then their simplicial approximations $\varphi_f,\varphi_g: \sd^N(K)\rightarrow L$ are in the same contiguity class for some integer $N$ (see for instance \cite{S}). By Proposition~\ref{p3}, $\simpd(\varphi_f,\varphi_g)=0$. Notice that this result  is concordant with the result (2) in Subsection 2.1 in \cite{VM}.

\begin{theorem} If $\varphi\sim \varphi'$ and $\psi\sim\psi'$, then $\simpd(\varphi,\psi)=\simpd(\varphi',\psi')$.
\end{theorem}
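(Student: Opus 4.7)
The plan is to prove the two inequalities $\simpd(\varphi,\psi)\leq \simpd(\varphi',\psi')$ and the reverse, each by transferring any covering that witnesses $\simpd^b_c$ on one side to a covering (with a slightly worse contiguity parameter) on the other side, and then passing to the stabilized limits in $c$ and $b$.

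First, since $\varphi\sim\varphi'$ and $\psi\sim\psi'$, by definition of the contiguity relation there exist non-negative integers $c_1,c_2$ with $\varphi\sim_{c_1}\varphi'$ and $\psi\sim_{c_2}\psi'$. Fix $b$ and $c$, and suppose $k=\simpd^b_c(\varphi',\psi')$, witnessed by a cover $J_0,\dots,J_k$ of $\operatorname{Sd}^b(K)$ on which $\gamma_{\varphi'}|_{J_i}$ and $\gamma_{\psi'}|_{J_i}$ are $c$-contiguous. I will argue that the \emph{same} cover works for $(\varphi,\psi)$ at the parameter $c_1+c+c_2$.

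The key step is the following chain of contiguities on each $J_i$. Applying Remark~\ref{r3} to the composition
\[
J_i\hookrightarrow \operatorname{Sd}^b(K)\xrightarrow{\iota} K\xrightarrow{\varphi,\varphi'} L,
\]
we see that $\gamma_\varphi|_{J_i}\sim_{c_1}\gamma_{\varphi'}|_{J_i}$ (we are post-composing the contiguity chain $\varphi\sim_{c_1}\varphi'$ with the simplicial map $\iota\circ j$, where $j\colon J_i\hookrightarrow\operatorname{Sd}^b(K)$). In the same way, $\gamma_{\psi'}|_{J_i}\sim_{c_2}\gamma_\psi|_{J_i}$. Concatenating the three chains
\[
\gamma_\varphi|_{J_i}\;\sim_{c_1}\;\gamma_{\varphi'}|_{J_i}\;\sim_{c}\;\gamma_{\psi'}|_{J_i}\;\sim_{c_2}\;\gamma_\psi|_{J_i}
\]
gives $\gamma_\varphi|_{J_i}\sim_{c_1+c+c_2}\gamma_\psi|_{J_i}$. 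Hence the same cover shows $\simpd^b_{\,c_1+c+c_2}(\varphi,\psi)\leq k=\simpd^b_c(\varphi',\psi')$.

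To conclude, let $c\to\infty$; since $c_1,c_2$ are fixed and the sequence $\{\simpd^b_c\}_c$ stabilizes, we obtain $\simpd^b(\varphi,\psi)\leq \simpd^b(\varphi',\psi')$ for every $b$. Letting $b\to\infty$ and using Proposition~\ref{p3} then yields $\simpd(\varphi,\psi)\leq \simpd(\varphi',\psi')$. By symmetry (swap the roles of the pairs, using $\varphi'\sim_{c_1}\varphi$ and $\psi'\sim_{c_2}\psi$), the reverse inequality holds, so equality follows. I do not anticipate any substantial obstacle: the argument is essentially a triangle-inequality bookkeeping using Remark~\ref{r3} to feed contiguity homotopies through pre- and post-composition, and the only care needed is to keep track of the parameters $c_1,c_2,c$ when stabilizing.
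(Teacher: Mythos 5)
Your proposal is correct and follows essentially the same route as the paper: fix a cover witnessing $\simpd^b_c(\varphi',\psi')$, use Remark~\ref{r3} to transport the contiguity chains $\varphi\sim_{c_1}\varphi'$ and $\psi\sim_{c_2}\psi'$ through precomposition with $\iota\circ j$, concatenate the three chains to get $\gamma_\varphi\sim_{c_1+c+c_2}\gamma_\psi$ on each piece, and stabilize (the paper writes out the concatenated family $G_j$ explicitly, but the content is identical). One cosmetic slip: you say ``post-composing'' the chain with $\iota\circ j$ where you mean precomposing, though your displayed composition makes the intent unambiguous.
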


\begin{proof} First we will show that $\simpd(\varphi,\psi)\leq\simpd(\varphi',\psi')$. Since $\varphi\sim \varphi'$ and $\psi\sim\psi'$, we have $\varphi\sim_c \varphi'$ and $\psi\sim_d\psi'$ for some $c,d \in\mathbb{Z}_{\geq 0}$. 

Suppose $\simpd(\varphi',\psi')=k$. So 
\[
k=\simpd^b_a(\varphi',\psi') \hspace{0.1in} \text{for some large } a,b\in\mathbb{Z}_{\geq 0}. 
\]

By definition of $(b,a)$-simplicial distance, there are $k+1$ subcomplexes $J_i$'s such that for each $i$

\[
\gamma_\varphi':J_i\xrightarrow{j} \operatorname{Sd}^b(K)\xhookrightarrow{\iota} K\xrightarrow{\varphi'} L 
\]
\noindent and 
\[
\gamma_\psi':J_i\xrightarrow{j} \operatorname{Sd}^b(K)\xhookrightarrow{\iota} K\xrightarrow{\psi'} L 
\]

are $a$-contiguous. Hence on each $J_i$, 

\[
\varphi\circ \iota\circ j \sim_c \varphi' \circ \iota \circ j = \gamma_{\varphi'} \sim_a \gamma_{\psi'} = \psi' \circ \iota \circ j \sim_d \psi\circ \iota \circ j
\]

It follows that $\gamma_\varphi \sim_{c+a+d} \gamma_\psi$ where $\gamma_\varphi = \varphi\circ \iota\circ j $ and $\gamma_\psi = \psi\circ \iota\circ j $ . In other words, there exists a simplicial map $G_j:J_i\rightarrow L$ such that $G_0=\gamma_\varphi$, $G_{c+a+d}=\gamma_\psi$ and $(G_{j-1}, G_j)$ is 1-contiguous for each $j$. More precisely, $G_j$ is given as follows.

\[
G_i:=\begin{cases} 
      \bar{H}_j\circ \iota\circ j &, \hspace{0.1in}  0\leq j\leq c \\
     \bar{F}_{j-c} &, \hspace{0.1in}  c\leq j\leq c+a \\
     \bar{B}_{j-c-a}\circ \iota\circ j &, \hspace{0.1in}  c+a\leq j\leq c+a+d 
   \end{cases}
\]
\vspace{0.1in} 

\noindent where $\bar{H}_j$ is the family of maps of length $c$ (between $\varphi$ and $\varphi'$), $\bar{F}_j$ is the family of maps of length $a$ (between $\gamma_{\varphi'}$ and $\gamma_{\psi'}$) and $\bar{B}_j$ is the family of maps of length $d$ (between $\psi$ and $\psi'$).

Therefore $\simpd^b_{c+a+d}(\varphi,\psi)\leq k$ which implies that $\simpd(\varphi,\psi)\leq k$. The other way around can be proved similarly and the required equality holds. 
\end{proof}

The following theorem shows how simplicial distance behave under the composition. 

\begin{theorem}\label{thm2} If $\varphi, \psi: K\rightarrow L$ and $\alpha, \bar{\alpha}: L \rightarrow M$ are simplicial maps and $\alpha \sim_a \bar{\alpha}$ for some $a\in \mathbb{Z}_{\geq 0}$, then 
\[
\simpd(\alpha \circ \varphi, \bar{\alpha} \circ \psi)\leq \simpd(\varphi,\psi).
\]

\end{theorem}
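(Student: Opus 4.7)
The plan is to start from a covering that realizes $\simpd^b_c(\varphi,\psi)$ for large $b,c$ and show that the same covering serves, with a controlled increase in the contiguity parameter, as a witness for $\simpd^b_{c+a}(\alpha\circ\varphi, \bar\alpha\circ\psi)$.

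Concretely, suppose $\simpd(\varphi,\psi)=k$, and pick $b,c$ large enough that $\simpd^b_c(\varphi,\psi)=k$. By definition there exist subcomplexes $J_0,\dots,J_k$ covering $\sd^b(K)$ such that on each $J_i$ the maps
\[
\gamma_\varphi=\varphi\circ\iota\circ j,\qquad \gamma_\psi=\psi\circ\iota\circ j
\]
satisfy $\gamma_\varphi\sim_c\gamma_\psi$. I would then use Remark~\ref{r3} in two steps. First, postcomposing the $c$-contiguity chain between $\gamma_\varphi$ and $\gamma_\psi$ with $\alpha$ gives
\[
\alpha\circ\gamma_\varphi \sim_c \alpha\circ\gamma_\psi
\]
on each $J_i$. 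Second, since $\alpha\sim_a\bar\alpha$, applying Remark~\ref{r3} to the contiguity chain between $\alpha$ and $\bar\alpha$ (precomposed with $\gamma_\psi$) yields
\[
\alpha\circ\gamma_\psi \sim_a \bar\alpha\circ\gamma_\psi
\]
on each $J_i$. Concatenating the two chains produces a length $c+a$ contiguity chain from $\alpha\circ\gamma_\varphi$ to $\bar\alpha\circ\gamma_\psi$, i.e.\ the maps corresponding to $\alpha\circ\varphi$ and $\bar\alpha\circ\psi$ on $J_i$ are $(c+a)$-contiguous.

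Since this holds for every $J_i$ in the cover, the same cover witnesses
\[
\simpd^{b}_{c+a}(\alpha\circ\varphi,\bar\alpha\circ\psi)\leq k=\simpd^{b}_c(\varphi,\psi).
\]
Passing to the limit in $c$ (and then in $b$) gives $\simpd(\alpha\circ\varphi,\bar\alpha\circ\psi)\leq\simpd(\varphi,\psi)$, as required. There is no genuine obstacle here; the only small point to be careful about is that the composition $\iota$ with $\alpha$ or $\bar\alpha$ does not create a new approximation issue, but this is handled precisely by Remark~\ref{r3}, which allows pre- and post-composition with arbitrary simplicial maps inside a contiguity chain.
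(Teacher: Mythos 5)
Your proof is correct and follows essentially the same route as the paper: both start from a cover realizing $\simpd^b_c(\varphi,\psi)$ and promote the contiguity chain on each $J_i$ to one between $\alpha\circ\gamma_\varphi$ and $\bar{\alpha}\circ\gamma_\psi$. The only difference is that you concatenate the two chains sequentially via Remark~\ref{r3}, obtaining the parameter $c+a$, whereas the paper runs them in parallel via Remark~\ref{r1} to get $\max\{a,c\}$; both bounds suffice once one passes to the limit in $c$.
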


\begin{proof} Let $\simpd(\varphi,\psi)=k$. Then for some large $b,c$
\[
k= \simpd^b_c(\varphi,\psi).
\]

So there exist subcomplexes $J_0, J_1, \ldots, J_k$ covering of $\sd^b(K)$ such that 
\[
\gamma_\varphi:J_i\xrightarrow{j} \operatorname{Sd}^b(K)\xhookrightarrow{\iota} K\xrightarrow{\varphi} L 
\]
\noindent and 
\[
\gamma_\psi:J_i\xrightarrow{j} \operatorname{Sd}^b(K)\xhookrightarrow{\iota} K\xrightarrow{\psi} L 
\]

are $c$-contiguous for each $i$. 

Define 
\[
\alpha \circ \gamma_\varphi:J_i\xrightarrow{j} \operatorname{Sd}^b(K)\xhookrightarrow{\iota} K\xrightarrow{\varphi} L \xrightarrow{\alpha} M
\]
\noindent and 
\[
\bar{\alpha} \circ \gamma_\psi:J_i\xrightarrow{j} \operatorname{Sd}^b(K)\xhookrightarrow{\iota} K\xrightarrow{\psi} L \xrightarrow{\bar{\alpha}} M
\]

In the next argument we will show that these maps are $\operatorname{max}\{a,c\}$-contiguous. 

Since $\gamma_\varphi \sim_c \gamma_\psi$, for each $i$ there exists $H_{j}: J_i \rightarrow L$ such that $H_0=\gamma_\varphi$, $H_c= \gamma_\psi$ and $(H_{s-1}, H_s)$ is 1-contiguous for $s=1,2,\ldots, c$.

Since $\alpha\sim_a \bar{\alpha}$, there exist $F_j: L\rightarrow M$ such that $F_0=\alpha$, $F_a=\bar{\alpha}$ and $(F_{s-1},F_s)$ is 1-contiguous for $s=1,2,\ldots, a$.

Define $\widetilde{F_j}: L\rightarrow M$ by
\[
\widetilde{F_j}=\begin{cases} 
      F_j &, \hspace{0.1in} \text{ if } 0\leq j \leq a \\
      F_a &, \hspace{0.1in} \text{ if } a\leq j \leq \operatorname{max}\{a,c\}\\
   \end{cases}
\]

Define $\widetilde{H_j}: J_i\rightarrow L$ by
\[
\widetilde{H_j}=\begin{cases} 
      H_j &, \hspace{0.1in} \text{ if } 0\leq j \leq c \\
      H_c &, \hspace{0.1in} \text{ if } c\leq j \leq \operatorname{max}\{a,c\}\\
   \end{cases}
\] 

Define $G_j= \widetilde{F_j} \circ \widetilde{H_j}: J_i\rightarrow L \rightarrow M $ so that $G_0=\alpha\circ \gamma_\varphi$ and $G_{\operatorname{max}\{a,c\}}= \bar{\alpha} \circ \gamma_\psi $. Moreover by Remark~\ref{r1}, each $(G_{j-1},G_j)$ is 1-contiguous for $j=1,2,\ldots, \operatorname{max}\{a,c\}$. This gives the $\operatorname{max}\{a,c\}$-contiguity between $\alpha \circ \gamma_\varphi$ and $\bar{\alpha} \circ \gamma_\psi$. Hence $\simpd^b_{\operatorname{max}\{a,c\}}(\alpha\circ \varphi, \bar{\alpha}\circ \psi)\leq k$. Therefore $\simpd(\alpha\circ \varphi, \bar{\alpha}\circ \psi) \leq k$.

\end{proof}

\begin{corollary}\label{c5} Let $\varphi,\psi: K\rightarrow L$ be simplicial maps and let $\alpha: L \rightarrow M$ be a simplicial map with a left strong homotopy type inverse (i.e., $\beta \circ \alpha \sim \id_L$ for some $\beta: M\rightarrow L$). Then $\simpd(\alpha\circ \varphi, \alpha \circ \psi)=\simpd(\varphi,\psi)$.
\end{corollary}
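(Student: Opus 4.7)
The plan is to prove the two inequalities separately, with both using Theorem~\ref{thm2} as the workhorse and the preceding ``homotopy invariance'' theorem ($\varphi \sim \varphi', \psi \sim \psi' \Rightarrow \simpd(\varphi,\psi)=\simpd(\varphi',\psi')$) to clean up.

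For the easy direction $\simpd(\alpha\circ\varphi, \alpha\circ\psi) \leq \simpd(\varphi,\psi)$, I would just apply Theorem~\ref{thm2} with $\bar\alpha = \alpha$; the hypothesis $\alpha \sim_a \bar\alpha$ is trivially satisfied with $a=0$.

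For the reverse inequality $\simpd(\varphi,\psi) \leq \simpd(\alpha\circ\varphi, \alpha\circ\psi)$, I would apply Theorem~\ref{thm2} again, this time to the maps $\alpha\circ\varphi, \alpha\circ\psi \colon K \to M$ post-composed with $\beta,\beta \colon M \to L$ (so the ``$a$'' here is zero as well). This yields
\[
\simpd(\beta\circ\alpha\circ\varphi,\, \beta\circ\alpha\circ\psi) \leq \simpd(\alpha\circ\varphi, \alpha\circ\psi).
\]
Now I would invoke the hypothesis $\beta \circ \alpha \sim \id_L$ to promote the left-hand side: by Remark~\ref{r1} (composition of contiguity classes is well defined), $\beta\circ\alpha\circ\varphi \sim \id_L\circ\varphi = \varphi$ and similarly $\beta\circ\alpha\circ\psi \sim \psi$. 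Then the earlier theorem about homotopy-type invariance of $\simpd$ identifies $\simpd(\beta\circ\alpha\circ\varphi, \beta\circ\alpha\circ\psi)$ with $\simpd(\varphi,\psi)$, giving the desired inequality.

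The main obstacle is essentially bookkeeping rather than a real difficulty: I need to confirm that the strong contiguity relation $\sim$ is compatible with right composition by $\varphi$ (and $\psi$), which is exactly what Remark~\ref{r1} guarantees once one packages the $c$-contiguity chain $H_0,\dots,H_c$ between $\beta\circ\alpha$ and $\id_L$ and right-composes termwise with $\varphi$. Combining the two inequalities gives the claimed equality $\simpd(\alpha\circ\varphi, \alpha\circ\psi) = \simpd(\varphi,\psi)$.
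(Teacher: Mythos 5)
Your proposal is correct, and both inequalities go through as you describe. The forward direction is exactly the paper's (a direct application of Theorem~\ref{thm2} with $\bar\alpha=\alpha$, $a=0$). For the reverse direction your route differs in packaging from the paper's: the paper unwinds the definition of $\simpd^b_c(\alpha\circ\varphi,\alpha\circ\psi)=k$, post-composes the witnessing contiguity chains with $\beta$ on each subcomplex $J_i$, and splices in the chain for $\beta\circ\alpha\sim_{\bar c}\id_L$ on both ends to produce an explicit $(2\bar c+c)$-contiguity between $\varphi\circ\iota\circ j$ and $\psi\circ\iota\circ j$, concluding $\simpd^b_{2\bar c+c}(\varphi,\psi)\le k$. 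You instead treat the two previously proved results as black boxes: Theorem~\ref{thm2} applied to post-composition by $\beta$ gives $\simpd(\beta\alpha\varphi,\beta\alpha\psi)\le\simpd(\alpha\varphi,\alpha\psi)$, and the homotopy-invariance theorem (together with $\beta\alpha\varphi\sim\varphi$, $\beta\alpha\psi\sim\psi$, which follow from $\beta\alpha\sim_{\bar c}\id_L$ by right-composing the chain, i.e.\ Remark~\ref{r3} rather than Remark~\ref{r1}, which is stated only for $1$-contiguity) identifies the left-hand side with $\simpd(\varphi,\psi)$. The underlying mechanism is identical, but your version is more modular and avoids re-deriving the chain-splicing that was already done in the proofs of those two theorems; the paper's version has the minor advantage of exhibiting the explicit contiguity bound $2\bar c+c$ at fixed subdivision level $b$. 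Only nitpick: cite Remark~\ref{r3} (or iterate Remark~\ref{r1} along the chain) for the step $\beta\alpha\varphi\sim\varphi$.
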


\begin{proof} By Theorem~\ref{thm2}, we have $\simpd(\alpha\circ \varphi, \alpha \circ \psi)\leq\simpd(\varphi,\psi)$. We will show the other way around. Let $\simpd(\alpha\circ \varphi, \alpha \circ \psi)=k$. Then for some large $b,c \in \mathbb{Z}_{\geq 0}$,
\[
k= \simpd^b_c(\alpha\circ\varphi,\alpha\circ\psi).
\]

So there exists $J_0, J_1, \ldots, J_k$ covering $\sd^b(K)$ such that 
\[
\gamma_{\alpha\varphi}:J_i\xrightarrow{j} \operatorname{Sd}^b(K)\xhookrightarrow{\iota} K\xrightarrow{\varphi} L \xrightarrow{\alpha} M
\]
\noindent and 
\[
\gamma_{\alpha\psi}:J_i\xrightarrow{j} \operatorname{Sd}^b(K)\xhookrightarrow{\iota} K\xrightarrow{\psi} L \xrightarrow{\alpha} M
\]

are $c$-contiguous for each $i$. Thus 

\[
\alpha \circ (\varphi \circ \iota) \circ j \sim_c \alpha \circ (\psi \circ \iota) \circ j
\]

\[
\beta \circ \alpha \circ (\varphi \circ \iota) \circ j \sim_c \beta \circ \alpha \circ (\psi \circ \iota) \circ j.
\]

On the other hand, since $\beta \circ \alpha \sim \id_L$, for some $\bar{c}\in \mathbb{Z}_{\geq 0}$ we have $\beta \circ \alpha \sim_{\bar{c}} \id_L$. Hence 
\[
\id_L \circ (\varphi \circ \iota) \circ j \sim_{\bar{c}}  \beta \circ \alpha \circ (\varphi \circ \iota) \circ j \sim_c \beta \circ \alpha \circ (\psi \circ \iota) \circ j\sim_{\bar{c}}\id_L \circ (\psi \circ \iota) \circ j 
\]

Thus 

\[
\id_L \circ (\varphi \circ \iota) \circ j \sim_{2\bar{c}+c} \id_L \circ (\psi \circ \iota) \circ j .
\]
\[
(\varphi \circ \iota) \circ j \sim (\psi \circ \iota) \circ j
\]

It follows that $\simpd^b_c(\varphi,\psi)\leq k$. Therefore $\simpd(\varphi,\psi)\leq k$.
\end{proof}

\end{document}